\newtheorem{thm}{Theorem}
\newtheorem{lem}{Lemma}
\newtheorem{cor}{Corollary}
\theoremstyle{definition}
\newtheorem{definition}{Definition}
\newtheorem{example}{Example}
\def \arxiv {1}
\newcommand{\pmf}[1]{p_{#1}}
\newcommand{\dom}[1]{\mathcal{#1}}
\newcommand{\Prob}{\mathbb{P}}
\newcommand{\card}[1]{\mathrm{card}(#1)}
\newcommand{\Xvec}{\mathbf{X}}
\newcommand{\Yvec}{\mathbf{Y}}
\newcommand{\Zvec}{\mathbf{Z}}
\newcommand{\Wvec}{\mathbf{W}}
\newcommand{\limn}{\lim_{n\to\infty}}
\newcommand{\Mar}[1]{\mathsf{HMC}(#1)}
\newcommand{\Pm}{\mathbf{P}}
\newcommand{\Pment}[2]{P_{#1\to #2}}
\newcommand{\Qm}{\mathbf{Q}}
\newcommand{\Qment}[2]{Q_{#1\to #2}}
\newcommand{\Mvec}{\mathbf{M}}
\newcommand{\pmfy}[2][\empty]{\ifthenelse{\equal{#1}{\empty}}{\pmf{Y#2}(y#2)}{\pmf{Y#2|Y#1}(y#2|y#1)}}
\newcommand{\pmfx}[2][\empty]{\ifthenelse{\equal{#1}{\empty}}{\pmf{X#2}(x#2)}{\pmf{X#2|X#1}(x#2|x#1)}}
\newcommand{\pmfm}[2][\empty]{\ifthenelse{\equal{#1}{\empty}}{\pmf{Z#2}(y#2)}{\pmf{Z#2|Z#1}(y#2|y#1)}}
\renewcommand{\card}[1]{|#1|}
\newcommand{\crev}[1]{{\color{purple} #1}}
\newcommand{\abstracted}{We derive a sufficient condition for a $k$-th order homogeneous Markov chain $\Zvec$ with finite alphabet $\dom{Z}$ to have a unique invariant distribution on $\dom{Z}^k$. Specifically, \crev{let $\Xvec$ be a first-order, stationary Markov chain with finite alphabet $\dom{X}$ and a single recurrent class, let $g{:}\ \dom{X}\to\dom{Z}$ be non-injective, and define the (possibly non-Markovian) process $\Yvec:=g(\Xvec)$ (where $g$ is applied coordinate-wise). If $\Zvec$ is the $k$-th order Markov approximation of $\Yvec$, its invariant distribution is unique. We generalize this to non-Markovian processes $\Xvec$}.}
\begin{document}

\ifthenelse{\arxiv=1}{
\renewcommand{\crev}[1]{{\color{black} #1}}
\author{Bernhard C. Geiger}
\title{A Sufficient Condition for a Unique Invariant Distribution of a Higher-Order Markov Chain}
\maketitle

\begin{abstract}
  \abstracted
\end{abstract}
}
{
\begin{frontmatter}
  \title{A Sufficient Condition for a Unique Invariant Distribution of a Higher-Order Markov Chain}
  \author{Bernhard C. Geiger\corref{mycorrespondingauthor}}
  \address{Institute for Communications Engineering, Technical University of Munich,\\ Theresienstr. 90, 80333 Munich, Germany}
  \ead{bernhard.geiger@tum.de}
  \cortext[mycorrespondingauthor]{Corresponding author}

  \begin{abstract}
    \abstracted
  \end{abstract}
  

  \begin{keyword}
  Markov chains\sep invariant distribution\sep function of Markov chains
  \MSC[2010] 60J10
  \end{keyword}

\end{frontmatter}
\linenumbers
}{}

\section{Introduction}
We consider invariant distributions of a $k$-th order (i.e., ``multiple'') Markov chain $\Zvec:=(Z_n)_{n\in\mathbb{N}_0}$ on a finite alphabet $\dom{Z}$. For $k=1$, the invariant distribution $\pi$ is a probability distribution on $\dom{Z}$ and is unique if the Markov chain has a single recurrent class~\cite[Thm.~4.4.2]{Gallager_StochasticProcesses}. If, in addition, the Markov chain is aperiodic, then the distribution of $Z_n$ converges to this invariant distribution as $n\to\infty$~\cite[Thm.~4.3.7]{Gallager_StochasticProcesses}. A fortiori, uniqueness and convergence are ensured if the Markov chain is \emph{regular}, i.e., irreducible and aperiodic~\cite[Thm.~4.1.6]{Kemeny_FMC}.

For $k$-th order Markov chains, $k>1$, two types of invariant distributions can be considered: A distribution $\pi$ on $\dom{Z}$ and a distribution $\mu$ on $\dom{Z}^k$. 

The invariant distribution $\pi$ on $\dom{Z}$ is related to the eigenvector problem of nonnegative tensors. Chang et al.\ showed that the eigenvector associated with the largest eigenvalue of an irreducible tensor is positive, but not necessarily simple~\cite[Thm.~1.4]{Chang_NonnegativeTensors}. The results were used by Li and Ng \crev{in~\cite{Li_Tensor}} to derive conditions under which there exists a unique distribution $\pi$ on $\dom{Z}$ such that, for a $k$-th order Markov chain $\Zvec$,
\begin{equation}
 \forall z\in\dom{Z}{:}\quad \pi_z = \sum_{z_0,\dots,z_{k-1}\in\dom{Z}} \Prob(Z_k=z|Z_{k-1}=z_{k-1},\dots,Z_0=z_0) \pi_{z_{k-1}}\cdots \pi_{z_0}.
\end{equation}
Regarding convergence, Vladimirescu showed that if $\Zvec$ is a regular second-order Markov chain, then there exists a distribution $\pi$ on $\dom{Z}$ such that~\cite[eq.~(7.1.3)]{Kalpazidou_CycleRepresentationsOfMarkovProcesses}
\begin{equation}
 \forall z,z_0,\dots,z_{k-1}\in\dom{Z}{:}\quad \pi_z=\limn \Prob(Z_n=z|Z_{k-1}=z_{k-1},\dots,Z_0=z_0).
\end{equation}

The more interesting case concerns invariant distributions $\mu$ on $\dom{Z}^k$. Following Doob~\cite[p.~89]{Doob_StochasticProcesses}, every $k$-th order Markov chain $\Zvec$ on $\dom{Z}$ can be converted to a first-order Markov chain $\Zvec^{(k)}:=(Z_n,\dots,Z_{n+k-1})_{n\in\mathbb{N}_0}$ on $\dom{Z}^k$. Kalpazidou used this fact to characterize invariant distributions for Markov chains $\Zvec$ derived from \emph{weighted circuits}~\cite[Prop.~7.2.2]{Kalpazidou_CycleRepresentationsOfMarkovProcesses}. If $\Zvec^{(k)}$ is a regular first-order Markov chain, then $\Zvec$ is a regular $k$-th order Markov chain~\cite[Prop.~7.1.6]{Kalpazidou_CycleRepresentationsOfMarkovProcesses}. Moreover, since in this case $\Zvec^{(k)}$ has a unique invariant distribution on $\dom{Z}^k$, so has $\Zvec$.

The converse, however, is not true: The regularity of $\Zvec^{(k)}$ does \emph{not} follow from the regularity of $\Zvec$, and hence the uniqueness of an invariant distribution $\mu$ on $\dom{Z}^k$ is not guaranteed even for a regular $k$-th order Markov chain $\Zvec$. \crev{To address this problem, Herkenrath discussed the uniform ergodicity of a second-order Markov process $\Zvec$ on a general alphabet $\dom{Z}$. Specifically, he defined the second-order Markov process $\Zvec$ to be uniformly ergodic if the first-order Markov process $\Zvec^{[2]}:=(Z_{2n},Z_{2n+1})_{n\in\mathbb{N}_0}$ on $\dom{Z}^2$ is uniformly ergodic~\cite[Def.~4]{Herkenrath_UniformErgodicity}. Herkenrath showed how sufficient and/or necessary conditions for uniform ergodicity (such as a strengthened Doeblin condition) carry over from $\Zvec$ to $\Zvec^{[2]}$~\cite[Lem.~3]{Herkenrath_UniformErgodicity}. He moreover presented sufficient conditions for uniform ergodicity of certain classes of second-order Markov processes, such as nonlinear autoregressive time series with absolutely continuous noise processes~\cite[Thms.~2-5]{Herkenrath_UniformErgodicity}. Herkenrath moreover showed a relation between the invariant distributions $\pi$ and $\mu$ on $\dom{Z}$ and $\dom{Z}^2$, respectively~\cite[Cor.~2]{Herkenrath_UniformErgodicity}:
\begin{equation}
 \forall z\in\dom{Z}{:}\quad \pi_z=\sum_{z'\in\dom{Z}} \mu_{z,z'} = \sum_{z'\in\dom{Z}} \mu_{z',z}.
\end{equation}}

In this work, we present a sufficient condition for \crev{a $k$-th order homogeneous Markov chain $\Zvec$ on a finite alphabet $\dom{Z}$} to have a unique invariant distribution $\mu$ on $\dom{Z}^k$. The condition is formulated via a function of a first-order Markov chain  $\Xvec$ with a single recurrent class. Since functions of Markov chains, so-called \emph{lumpings}, usually do not possess the Markov property, one may need to approximate this lumping by a Markov chain with a given order. Assuming that this Markov approximation satisfies certain conditions, it can be shown that its invariant distribution is unique. We moreover generalize this result by letting $\Xvec$ be a higher-order Markov chain and a non-Markovian process, respectively.

\section{Problem Setting}

We abbreviate vectors as $z_1^k:=(z_1,\dots,z_k)$ and random vectors as $Z_1^k:=(Z_1,\dots,Z_k)$. If the length of the vector is clear from the context, we omit indices. The probability that $Z=z$ is written as $\pmf{Z}(z):=\Prob(Z=z)$; the conditional probability that $Z_1=z_1$ given that $Z_2=z_2$ is written as $\pmf{Z_1|Z_2}(z_1|z_2)$. Stochastic processes are written as bold-faced letters, e.g., $\Zvec:=(Z_n)_{n\in\mathbb{N}_0}$. We write sets with calligraphic letters. For example, the alphabet of $\Zvec$ is $\dom{Z}$. All processes and random variables are assumed to live on a finite alphabet, i.e., $\card{\dom{Z}}<\infty$. The complement of a set $\dom{A}\subseteq\dom{Z}$ is $\dom{Z}^c:=\dom{Z}\setminus\dom{A}$. Transition probability matrices are written in bold-face, too; whether a symbol is a matrix or a stochastic process will always be clear from the context. We naturally extend a function $g{:}\ \dom{Z}\to\dom{W}$ from scalars to vectors by applying it coordinate-wise, i.e., $g(z_1^k):=(g(z_1),\dots,g(z_k))$. Similarly, the preimage of a vector is the Cartesian product of the preimages, i.e., $g^{-1}(w_1^k):=g^{-1}(w_1)\times\cdots\times g^{-1}(w_k)$.

 A stochastic process $\Zvec$ is a $k$-th order Markov chain with alphabet $\dom{Z}$ if and only if
 \begin{equation}\label{eq:markovK}
  \forall n\ge k{:}\quad \forall z_0^n\in\dom{Z}^{n+1}{:}\quad 
  \pmf{Z_n|Z_0^{n-1}}(z_n|z_0^{n-1}) = \pmf{Z_n|Z_{n-k}^{n-1}}(z_n|z_{n-k}^{n-1}).
 \end{equation}
If the right-hand side of~\eqref{eq:markovK} does not depend on $n$, we can write
 \begin{equation}\label{eq:hmarkovK}
  \Qment{z_{n-k}^{n-1}}{z_n} := \pmf{Z_n|Z_0^{n-1}}(z_n|z_0^{n-1})
 \end{equation}
 and call $\Zvec$ \emph{homogeneous}. We let $\Qm$ be a $\card{\dom{Z}}^k\times\card{\dom{Z}}$ matrix with entries $\Qment{z_{n-k}^{n-1}}{z_n}$ and abbreviate $\Zvec\sim\Mar{k,\dom{Z},\Qm}$. \crev{Similarly, we define
\begin{equation}
 \Qment{z_{0}^{k-1}}{z}^{(n)} := \pmf{Z_{k+n-1}|Z_0^{k-1}}(z|z_0^{k-1})
\end{equation}
 and collect the corresponding values in the $\card{\dom{Z}}^k\times\card{\dom{Z}}$ matrix $\Qm^{(n)}$. Note that $\Qm^{(1)}=\Qm$.
}

We recall basic definitions for $k$-th order Markov chains $\Zvec\sim\Mar{k,\dom{Z},\Qm}$; the details can be found in, e.g.,~\cite[Def.~7.1.1-7.1.4]{Kalpazidou_CycleRepresentationsOfMarkovProcesses} or~\cite[Def.~4.2.2-4.2.7]{Gallager_StochasticProcesses}. A state $z\in\dom{Z}$ is \emph{accessible} from $z'\in\dom{Z}$ (in short: $z' \to z$), if and only if
\begin{equation}
\forall u\in\dom{Z}^{k-1}{:}\quad \exists n=n(u,i,j){:}\quad \Qment{(u,z')}{z}^{(n)}>0.
\end{equation}
If $z' \to z$ and $z \to z'$, then $z$ and $z'$ \emph{communicate} (in short: $z\leftrightarrow z'$). A state $z$ is \emph{recurrent} if $z\to z'$ implies $z'\to z$, otherwise $z$ is \emph{transient}. The relation ``$\leftrightarrow$'' partitions the set $\{z\in\dom{Z}{:}\quad z\leftrightarrow z\}$ into equivalence classes (called recurrent classes). The Markov chain $\Zvec$ is \emph{irreducible} if and only if $\dom{Z}$ is the unique recurrent class; it is \emph{regular} if and only if there exists $n\ge 1$ such that $\Qm^{(n)}$ is a positive matrix. 

A $k$-th order Markov chain $\Zvec\sim\Mar{k,\dom{Z},\Qm}$ can be converted to a first-order Markov chain on $\dom{Z}^k$. Let $\Zvec^{(k)}:=(Z_n^{n+k-1})_{n\in\mathbb{N}_0}$. Then, $\Zvec^{(k)}\sim\Mar{1,\dom{Z}^k,\Pm}$, where~\cite[p.~89]{Doob_StochasticProcesses}
\begin{equation}
 \forall z_0^{k-1},{z'}_0^{k-1}\in\dom{Z}^k{:}\quad 
 \Pment{{z'}_0^{k-1}}{{z}_0^{k-1}} = \begin{cases}
                                      \Qment{{z'}_0^{k-1}}{z_{k-1}} & z'_1=z_0,\dots,z'_{k-1}=z_{k-2}\\
                                      0 & \text{else}.
                                     \end{cases}
\end{equation}

\begin{definition}[Invariant Distribution]\label{def:invariant}
 Let $\Zvec\sim\Mar{k,\dom{Z},\Qm}$. A distribution $\mu$ on $\dom{Z}^k$ is \emph{invariant} if and only if
\begin{equation}\label{eq:higher:invariance}
 \forall z_1^k \in \dom{Z}^{k}{:}\quad \mu_{z_1^k} = \sum_{z_0\in\dom{Z}} \mu_{{z}_0^{k-1}} \Qment{{z}_0^{k-1}}{z_k}.
\end{equation}
\end{definition}

It can be shown that a distribution $\mu$ on $\dom{Z}^k$ is invariant for $\Zvec$ if and only if it is invariant for $\Zvec^{(k)}$. If $\Zvec^{(k)}$ has a single recurrent class, then this $\mu$ is unique~\cite[Thm.~4.4.2]{Gallager_StochasticProcesses}. The following example illustrates that even if $\Zvec$ is regular, $\Zvec^{(k)}$ may have multiple recurrent classes:

\begin{example}
 Let $\Zvec\sim\Mar{2,\{1,2,3,4\},\Qm}$, where
 \begin{equation}
  \Qm = \bordermatrix{ & 1 & 2 & 3 & 4 \cr
      1,1 & 0.5 & 0.5 	& 0 	& 0 \cr
      1,2 & 0   & 0   	& 1 	& 0 \cr
      1,3 & 0   & 1   	& 0 	& 0 \cr
      1,4 & 0	& 0   	& 1 	& 0 \cr
      2,1 & 0   &     0 &   0.5 &    0.5 \cr
      2,2 & 0  	&  0.5 	&   0.5 &         0 \cr
      2,3 & 0.5 &   0 	&   0 	&   0.5 \cr
      2,4 & 1 	& 0   	&0 	&        0 \cr
      3,1 & 0	& 1	&  0	&   0 \cr
      3,2 & 1	&   0	&  0	&  0 \cr
      3,3 & 0   &  0.5  & 0.5	&    0 \cr
      3,4 & 1	&    0	&    0  &   0 \cr
      4,1 & 0   & 1   	& 0 	& 0 \cr
      4,2 & 0   & 1   	& 0 	& 0 \cr
      4,3 & 0   & 1   	& 0 	& 0 \cr
      4,4 & 0   &     0 &   0.5 &    0.5
      }.
 \end{equation}
 This Markov chain $\Zvec$ is regular since $\Qm^{(10)}>0$. $\Zvec$ is such that, depending on the initial states, we either observe sequences $1-2-3-4-1$ and $1-2-3-1$ or sequences $1-4-3-2-1$ and $1-3-2-1$. It follows that $\Zvec^{(2)}$ has transient states $\{(1,1),(2,2),(2,4),(3,3),(4,2),(4,4)\}$ and two recurrent classes:
 \[
  \{(1,2),(2,3),(3,1),(3,4),(4,1)\}
 \]
 and
 \[
  \{(1,3),(3,2),(2,1),(1,4),(4,3)\}.
 \]
 Hence, there is no unique invariant distribution $\mu$ satisfying~\eqref{eq:higher:invariance}.
\end{example}

We define the Markov approximation of a non-Markovian process:

\begin{definition}[$k$-th order Markov Approximation]\label{def:Mapprox}
 Let $\Wvec$ be a stationary stochastic process on the finite alphabet $\dom{Z}$. The \emph{$k$-th order Markov approximation} of $\Wvec$ is a $k$-th order Markov chain $\Zvec\sim\Mar{k,\dom{Z},\Qm}$ with transition matrix $\Qm$ with entries 
 \begin{equation}\label{eq:aggregation}
 \forall z_0^k\in\dom{Z}^{k+1}{:}\quad 
 \Qment{z_0^{k-1}}{z_k} = \begin{cases}
                           \frac{p_{W_0^{k}}(z_0^{k})}{p_{W_0^{k-1}}(z_0^{k-1})},& p_{W_0^{k-1}}(z_0^{k-1})>0\\
                           \frac{1}{\card{\dom{Z}}}, & \text{else.}
                          \end{cases}
\end{equation}
\end{definition}

The approximation~\eqref{eq:aggregation} can be justified via information theory. We define the relative entropy rate between $\Wvec$ and a $k$-th order Markov chain $\Mvec\sim\Mar{k,\dom{Z},\tilde\Qm}$ as~\cite[p.~266]{Gray_Entropy}
\begin{equation}\label{eq:kldr}
 \mathbb{D}(\Wvec\Vert\Mvec):= \limn \frac{1}{n} \sum_{z_0^{n-1}{:}\ p_{W_0^{n-1}}(z_0^{n-1})>0} p_{W_0^{n-1}}(z_0^{n-1}) \log \frac{p_{W_0^{n-1}}(z_0^{n-1})}{p_{M_0^{n-1}}(z_0^{n-1})}
\end{equation}
whenever the limit exists and is finite. Then, one can show that~\cite[Cor.~10.4]{Gray_Entropy}
\begin{equation}\label{eq:inf}
 \inf_{\tilde\Qm} \mathbb{D}(\Wvec\Vert\Mvec) = \mathbb{D}(\Wvec\Vert\Zvec)
\end{equation}
where $\Zvec\sim\Mar{k,\dom{Z},\Qm}$ with $\Qm$ defined in~\eqref{eq:aggregation}. Note that the minimizer of~\eqref{eq:inf} is not unique: Those sequences $z_0^{n-1}$ that contain a subsequence $z_\ell^{\ell+k-1}$ with $p_{W_0^{k-1}}(z_\ell^{\ell+k-1})=0$ are not part of the sum in~\eqref{eq:kldr}, hence the choice of $\Qment{z_\ell^{\ell+k-1}}{z}$ is immaterial. We will justify our particular choice later. The following example shows that approximating $\Wvec$ may lead to counterintuitive results if $\Wvec$ is a first-order Markov chain:

\begin{example}\label{ex:strange}
  Let $\Wvec\sim\Mar{1,\{1,2,3\},\Pm}$ with $\Pment{1}{3}=\Pment{2}{1}=\Pment{3}{1}=0$ and all other entries $\Pment{x_0}{x_1}>0$. $\Wvec$ has a single recurrent class $\{2,3\}$ and a transient state $\{1\}$, hence the unique invariant distribution $\pi$ satisfies $\pi_1=0$. Let $\Zvec\sim\Mar{1,\{1,2,3\},\Qm}$ be the first-order Markov approximation of $\Wvec$. Since $\pi_1=0$, we have that
  \begin{equation}
    \exists x\in\{1,2,3\}{:}\quad \Qment{1}{x}=\frac{1}{3}\neq\Pment{1}{x}
  \end{equation}
  and $\Zvec\not\equiv\Wvec$. Nevertheless, we get $\mathbb{D}(\Wvec\Vert\Zvec)=0$.

  Moreover, the $k$-th order Markov approximation of $\Wvec$ may even fail to be Markov of any order smaller than $k$. Indeed, we may have that
  \begin{equation}
  \exists x_1^{k}\in\{2,3\}^k{:}\quad \Qment{(1,x_1^{k-1})}{x_k}=\frac{1}{3}\neq\Qment{(2,x_1^{k-1})}{x_k}= \Pment{x_{k-1}}{x_k}.
  \end{equation}
\end{example}

\begin{lem}\label{lem:invariant}
 Let $\Wvec$ be a stationary stochastic process on the finite alphabet $\dom{Z}$ and let $\Zvec\sim\Mar{k,\dom{Z},\Qm}$ be its $k$-th order Markov approximation. Then, $p_{W_0^{k-1}}$ is an invariant distribution of $\Zvec$.
\end{lem}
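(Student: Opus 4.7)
The plan is to verify the invariance equation~\eqref{eq:higher:invariance} directly by plugging $\mu_{z_1^k} := p_{W_0^{k-1}}(z_1^k)$ (understood in the evident sense as the pmf of a $k$-tuple drawn from $\Wvec$) into its right-hand side and simplifying using the definition of the Markov approximation together with stationarity of $\Wvec$.

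First I would split the sum $\sum_{z_0\in\dom{Z}} p_{W_0^{k-1}}(z_0^{k-1}) \Qment{z_0^{k-1}}{z_k}$ into two pieces according to whether $p_{W_0^{k-1}}(z_0^{k-1})>0$ or $=0$. In the second case, the $1/\card{\dom{Z}}$ value assigned to $\Qment{z_0^{k-1}}{z_k}$ by~\eqref{eq:aggregation} is multiplied by zero and therefore drops out; this is precisely the point of isolating the degenerate case, and it explains why the particular ``uniform'' choice is harmless here. In the first case, the ratio in~\eqref{eq:aggregation} telescopes: $p_{W_0^{k-1}}(z_0^{k-1}) \cdot \frac{p_{W_0^k}(z_0^k)}{p_{W_0^{k-1}}(z_0^{k-1})} = p_{W_0^k}(z_0^k)$.

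Next I would put the two pieces back together. Since the contribution from the degenerate case is zero, and $p_{W_0^k}(z_0^k)=0$ automatically whenever $p_{W_0^{k-1}}(z_0^{k-1})=0$, the sum equals $\sum_{z_0\in\dom{Z}} p_{W_0^k}(z_0^k)$, which is the marginal $p_{W_1^k}(z_1^k)$. Finally, stationarity of $\Wvec$ gives $p_{W_1^k}(z_1^k)=p_{W_0^{k-1}}(z_1^k)$, matching the left-hand side of~\eqref{eq:higher:invariance}.

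There is no real obstacle; the only subtlety is bookkeeping around the zero-probability case in~\eqref{eq:aggregation}, which is dispatched by the observation that any $z_0^{k-1}$ with $p_{W_0^{k-1}}(z_0^{k-1})=0$ contributes nothing to the sum regardless of how $\Qm$ is defined on such histories. I would state this explicitly, both to justify the ``immaterial choice'' remark made before the lemma and to clarify that the lemma would hold for \emph{any} other measurable completion of $\Qm$ on the zero-probability histories.
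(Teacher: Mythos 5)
Your proof is correct and follows essentially the same route as the paper's: substitute $p_{W_0^{k-1}}$ into the right-hand side of~\eqref{eq:higher:invariance}, note that terms with $p_{W_0^{k-1}}(z_0^{k-1})=0$ vanish regardless of how $\Qm$ is defined there, cancel the ratio from~\eqref{eq:aggregation} to get $\sum_{z_0}p_{W_0^k}(z_0^k)=p_{W_1^k}(z_1^k)$, and conclude by stationarity. Your version merely makes the zero-probability bookkeeping more explicit than the paper's one-line justification of step~$(a)$.
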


\begin{proof}
If $p_{W_0^{k-1}}$ is invariant, then for every $z_1^k\in\dom{Z}^{k}$,
\begin{multline}
 p_{W_0^{k-1}}(z_1^k) = \sum_{z_0\in\dom{Z}} p_{W_0^{k-1}}(z_0^{k-1}) \Qment{z_0^{k-1}}{z_k}\\
 \stackrel{(a)}{=}\sum_{z_0\in\dom{Z}} p_{W_0^{k}}(z_0^{k}) = p_{W_1^{k}}(z_1^{k}) \stackrel{(b)}{=}p_{W_0^{k-1}}(z_1^{k})
\end{multline}
where $(a)$ is because those $z_0$ for which $p_{W_0^{k-1}}(z_0^{k-1})=0$ do not influence the sum, and where $(b)$ is because $\Wvec$ is stationary.
\end{proof}

\section{Main Result}
We present our main result:
\begin{thm}\label{thm:main}
Let $\Xvec\sim\Mar{1,\dom{X},\Pm}$ have a single recurrent class and invariant distribution $\pi$. Let $\Xvec$ be stationary, i.e., $\pmf{X_0}=\pi$. Let $g{:}\ \dom{X}\to\dom{Y}$ \crev{where $1<\card{\dom{Y}}\le\card{\dom{X}}$, i.e., $g$ may be non-injective}. Define $\Yvec$ via $Y_n=g(X_n)$, and let $\Zvec\sim\Mar{k,\dom{Y},\Qm}$ be the $k$-th order Markov approximation of $\Yvec$ with $\Qm$ given in~\eqref{eq:aggregation}. Then, $\Zvec$ has a unique invariant distribution $\mu$ on $\dom{Y}^k$ satisfying
\begin{equation}
 \forall y_0^{k-1}\in\dom{Y}{:}\quad \mu_{y_0^{k-1}} = \pmfy{_0^{k-1}} 
 = \sum_{x_0^{k-1}\in g^{-1}(y_0^{k-1})} \pi_{x_0} \prod_{\ell=1}^k \Pment{x_{\ell-1}}{x_\ell}.
\end{equation}
\end{thm}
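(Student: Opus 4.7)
My plan is to get existence straight from Lemma~\ref{lem:invariant} and spend essentially the whole argument on uniqueness, which I will reduce to showing that the first-order chain $\Zvec^{(k)}$ on $\dom{Y}^k$ has a single recurrent class. Existence will be immediate: Lemma~\ref{lem:invariant} applied to $\Wvec=\Yvec$ says that $\pmf{Y_0^{k-1}}$ is an invariant distribution of $\Zvec$, and expanding this marginal using $Y_n=g(X_n)$ together with the Markov factorization of $\Xvec$ under $\pi$ recovers the displayed formula.

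For uniqueness I will define $\dom{S}:=\{y_0^{k-1}\in\dom{Y}^k: \pmf{Y_0^{k-1}}(y_0^{k-1})>0\}$ and show three things: (i) $\dom{S}$ is closed under one-step transitions of $\Zvec^{(k)}$; (ii) any two states of $\dom{S}$ communicate in $\Zvec^{(k)}$; (iii) every state of $\dom{S}^c$ can reach $\dom{S}$. Steps (i)--(ii) will identify $\dom{S}$ as a closed communicating, hence recurrent, class of $\Zvec^{(k)}$, and (iii) combined with (i) will rule out any further recurrent class, so $\dom{S}$ is the unique recurrent class and $\mu$ is unique.

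Steps (i) and (iii) I expect to be short and to rest only on~\eqref{eq:aggregation}. For (i), $\Qment{y_0^{k-1}}{y_k}>0$ together with $\pmf{Y_0^{k-1}}(y_0^{k-1})>0$ forces $\pmf{Y_0^{k}}(y_0^k)>0$, after which marginalization and stationarity of $\Yvec$ place the successor $y_1^k$ in $\dom{S}$. For (iii), from any $y_0^{k-1}\notin\dom{S}$ the outgoing transitions of $\Zvec^{(k)}$ are uniform by~\eqref{eq:aggregation}, so picking a fixed target $y'_0^{k-1}\in\dom{S}$ and feeding in its symbols one at a time produces a positive-probability path of length at most $k$ that either enters $\dom{S}$ at an intermediate state or else terminates at $y'_0^{k-1}$ itself.

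The hard step will be (ii), which is where the single-recurrent-class hypothesis on $\Xvec$ is really used. Given $y_0^{k-1},y'_0^{k-1}\in\dom{S}$, unpacking the formula for $\pmf{Y_0^{k-1}}$ yields pre-images $x_0^{k-1}\in g^{-1}(y_0^{k-1})$ and ${x'}_0^{k-1}\in g^{-1}(y'_0^{k-1})$ whose coordinates all lie in the recurrent class of $\Xvec$ (because $\pi_{x_0},\pi_{x'_0}>0$) and whose consecutive $\Pm$-transitions are strictly positive; the single-recurrent-class assumption then supplies a positive-probability $\Xvec$-path from $x_{k-1}$ to $x'_0$. Splicing yields a single path $a_0,\ldots,a_N$ with $(a_0,\ldots,a_{k-1})=x_0^{k-1}$, $(a_{N-k+1},\ldots,a_N)={x'}_0^{k-1}$, all consecutive $\Pm$-transitions positive, and $\pi_{a_0}>0$. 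Setting $b_i:=g(a_i)$ I will verify that $\pmf{Y_0^{j+k}}(b_0^{j+k})\ge \pi_{a_0}\prod_{i=1}^{j+k}\Pment{a_{i-1}}{a_i}>0$ for every $0\le j\le N-k$, and conclude via the ratio in~\eqref{eq:aggregation} together with stationarity that each window transition $\Qment{b_j^{j+k-1}}{b_{j+k}}$ is strictly positive; sliding the $k$-window along $b_0^N$ will then trace out a positive-probability trajectory of $\Zvec^{(k)}$ from $y_0^{k-1}$ to $y'_0^{k-1}$. The main obstacle is exactly this last bookkeeping: translating a positive-probability path in the state graph of $\Xvec$ into positivity of the $\Qm$-transitions takes care, because $\Qm$ is defined as a ratio of $\Yvec$-marginals rather than directly in terms of $\Xvec$-paths.
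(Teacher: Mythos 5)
Your proposal is correct and follows essentially the same route as the paper: existence via Lemma~\ref{lem:invariant}, and uniqueness by showing that $\Zvec^{(k)}$ has the single recurrent class $\dom{S}=\{y_0^{k-1}\in\dom{Y}^k:\ \pmf{Y_0^{k-1}}(y_0^{k-1})>0\}$ through the same three steps (closure of $\dom{S}$, communication within $\dom{S}$ via the single recurrent class of $\Xvec$, and escape from $\dom{S}^c$ using the uniform transitions in~\eqref{eq:aggregation}). The only difference is cosmetic and confined to the communication step: the paper first establishes positivity for the original process $\Yvec$ and transfers it to $\Zvec$ via the absolute-continuity Lemma~\ref{lem:absolutecontinuity}, whereas you verify positivity of each one-step $\Qm$-window transition directly along an explicit spliced path; both are sound.
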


\crev{
Theorem~\ref{thm:main} holds also for bijective functions. For these functions, however, $\Yvec$ is a first-order Markov chain and we may face counterintuitive results as illustrated in Example~\ref{ex:strange}. A similar situation may occur if $\Xvec$ is \emph{lumpable} w.r.t.\ the non-injective function $g$, i.e., if $\Yvec$ is a Markov chain~\cite[\S 6.3]{Kemeny_FMC}. Hence, Theorem~\ref{thm:main} is useful mainly in situations where $\Yvec$ is not Markov of any order.

Theorem~\ref{thm:main} holds for the definition of $\Qm$ in~\eqref{eq:aggregation} where the conditional distribution for a conditioning event with zero probability is chosen as the uniform distribution. More generally, it holds if the uniform distribution is replaced by any positive probability vector on $\dom{Z}$. This positivity constraint cannot be dropped, however, as the following example illustrates:
}

\begin{example}\label{ex:reducible}
 Let $\Xvec\sim\Mar{1,\{1,2,3\},\Pm}$ with $\Pment{1}{1}=0$ and all other entries $\Pment{x_0}{x_1}>0$. Let $g{:}\ \{1,2,3\}\to\{1,2\}$ be such that $g(1)=1$ and $g(2)=g(3)=2$. Suppose we want to model $\Yvec$ by a second-order Markov chain $\Zvec$. We get
  \begin{equation}
  \Qm = \bordermatrix{ & 1 & 2 \cr
      1,1 & ? & ? \cr
      1,2 & q & 1-q \cr
      2,1 & 0 & 1  \cr
      2,2 & p & 1-p 	
      }
 \end{equation}
 where $p,q\in(0,1)$ and where the $?$ indicates that the distribution of $\Yvec$ does not tell us how to choose these probabilities (the event $Y_0^1=(1,1)$ occurs with probability zero).
 
 Choosing $\Qment{(1,1)}{1}=1$ (and hence $\Qment{(1,1)}{2}=0$) results in $\Zvec^{(2)}$ having the two recurrent classes $\{(1,1)\}$ and $\{(1,2),(2,1),(2,2)\}$. The invariant distribution is thus not unique. Indeed, for every invariant distribution, the state $(1,1)$ has the same probability as it has in the initial distribution.  
\end{example}

The following result extends Theorem~\ref{thm:main} to the scenario where $\Yvec$ is the function of a $\ell$-th order Markov chain.

\begin{cor}\label{cor:higher}
Let $\Xvec\sim\Mar{\ell,\dom{X},\Pm}$ be such that $\Xvec^{(\ell)}$ has a single recurrent class and the unique invariant distribution $\pi$. Let $\Xvec^{(\ell)}$ be stationary, i.e., $\pmf{X_0^{\ell-1}}=\pi$. Let $g{:}\ \dom{X}\to\dom{Y}$. Define $\Yvec$ via $Y_n=g(X_n)$, and let $\Zvec\sim\Mar{k,\dom{Y},\Qm}$ be the $k$-th order Markov approximation of $\Yvec$ with $\Qm$ given in~\eqref{eq:aggregation}. Then, $\Zvec$ has a unique invariant distribution $\mu$ on $\dom{Y}^k$.
\end{cor}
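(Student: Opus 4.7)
The strategy is to reduce Corollary~\ref{cor:higher} directly to Theorem~\ref{thm:main} by lifting $\Xvec$ to a first-order Markov chain via Doob's construction. Concretely, the $\ell$-th order chain $\Xvec$ is in one-to-one correspondence with the first-order chain $\Xvec^{(\ell)}=(X_n,\ldots,X_{n+\ell-1})_{n\in\mathbb{N}_0}$ on the alphabet $\dom{X}^\ell$. By hypothesis, $\Xvec^{(\ell)}$ has a single recurrent class and is stationary with invariant distribution $\pi$, so it satisfies all the conditions that Theorem~\ref{thm:main} imposes on the first-order chain (aside from the alphabet-size constraint, which I treat below).

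I then introduce the function $\tilde{g}\colon\dom{X}^\ell\to\dom{Y}$ defined by $\tilde{g}(x_0^{\ell-1}):=g(x_0)$, and observe that $\tilde{g}(X_n,\ldots,X_{n+\ell-1})=g(X_n)=Y_n$. Hence $\Yvec$ is the coordinate-wise image of $\Xvec^{(\ell)}$ under $\tilde{g}$, exactly the setup of Theorem~\ref{thm:main}. Without loss of generality $\card{\dom{Y}}>1$ (otherwise the conclusion is trivial), and $\card{\dom{Y}}\le\card{\dom{X}}\le\card{\dom{X}}^\ell$, so the alphabet-size hypothesis holds as well. Note that $\tilde{g}$ is non-injective whenever $\ell>1$ or $g$ is non-injective, which is fine since Theorem~\ref{thm:main} explicitly allows that.

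The key observation that makes the reduction work is that the $k$-th order Markov approximation $\Zvec$ in Definition~\ref{def:Mapprox} depends only on the finite-dimensional distributions $p_{Y_0^{k-1}}$ and $p_{Y_0^{k}}$, not on how $\Yvec$ is represented as a functional of some underlying Markov chain. Applying Theorem~\ref{thm:main} with $\Xvec^{(\ell)}$ playing the role of the first-order chain and $\tilde{g}$ playing the role of $g$ therefore yields the claimed uniqueness of $\mu$ on $\dom{Y}^k$. (As a by-product, one even obtains an explicit formula for $\mu$ by summing over preimages in $\dom{X}^\ell$ rather than in $\dom{X}$, though the corollary statement does not require it.)

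I anticipate no substantive obstacle: the argument is essentially bookkeeping that repackages $\Yvec$ as a coordinate-wise image of a first-order Markov chain with a single recurrent class. The only points that deserve care are confirming that the hypotheses of Theorem~\ref{thm:main} (single recurrent class, stationarity, finite alphabet, alphabet-size constraint) all transfer to the lifted chain, and noting that Definition~\ref{def:Mapprox} is insensitive to the particular representation of $\Yvec$.
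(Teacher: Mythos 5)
Your proposal is correct and follows exactly the paper's own argument: lift $\Xvec$ to the first-order chain $\Xvec^{(\ell)}$ on $\dom{X}^\ell$, define $\tilde g(x_0^{\ell-1})=g(x_0)$ (the paper calls it $g^{(\ell)}$), and invoke Theorem~\ref{thm:main}. Your extra checks of the alphabet-size hypothesis and the representation-independence of Definition~\ref{def:Mapprox} are sound and slightly more careful than the paper's terse version, but the route is the same.
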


\begin{proof}
 Note that $\Xvec^{(\ell)}$ is a first-order Markov chain on $\dom{X}^\ell$. By assumption, it has a single recurrent class. We define the function $g^{(\ell)}{:}\ \dom{X}^\ell \to \dom{Y}$ as
 \begin{equation}
  \forall x_0^{\ell-1}\in\dom{X}^\ell{:}\quad g^{(\ell)}(x_0^{\ell-1}) = g(x_0).
 \end{equation}
 Hence, $Y_n=g(X_n)=g^{(\ell)}(X_n^{n+\ell-1})$. Theorem~\ref{thm:main} completes the proof.
\end{proof}

\begin{figure}[t] 
\centering
  \begin{pspicture}[showgrid=false](-1,1)(6,6.25)
    \psset{style=Arrow}
    \pssignal(0,6){v}{$\Xvec$}
    \pssignal(4,6){m}{$\Mvec\sim\Mar{k,\dom{X},\Pm}$}
    \pssignal(0,3){y}{$\Yvec$}
    \pssignal(4,3){yt}{$\tilde\Yvec$}
    \pssignal(2,1){z}{$\Zvec\sim\Mar{k,\dom{Y},\Qm}$}
    \ncline{->}{v}{y} \aput{0}{$g$}
    \ncline{m}{yt}\aput{0}{$g$}
    \ncline[style=Dash]{v}{m} \aput{0}{$k$-App.}
    \ncline[style=Dash]{y}{z} \aput{0}{$k$-App.}
    \ncline[style=Dash]{yt}{z} \aput{0}{$k$-App.}
\end{pspicture}
\caption{The order of Markov approximation and projection through a non-injective function commutes. Dashed arrows labeled with ``$k$-App.'' denote the $k$-th order Markov approximation in the sense of Definition~\ref{def:Mapprox}, solid arrows labeled with ``$g$'' denote a projection of the stationary process through the non-injective function $g$. While the (generally non-Markovian) processes $\Yvec$ and $\tilde\Yvec$ differ, their $k$-th order Markov approximations are identical.}
\label{fig:commute}
\end{figure}
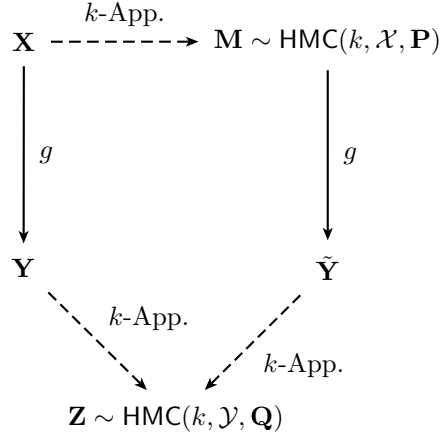

The condition in Corollary~\ref{cor:higher} holds for every pair of integers $\ell$ and $k$. However, requiring that $\Xvec$ has a unique invariant distribution on $\dom{X}^\ell$ is too restrictive if $k<\ell$. The next result shows that it suffices if the $k$-th order Markov approximation of $\Xvec$ has a unique invariant distribution on $\dom{X}^k$ to ensure that $\Zvec$ has a unique invariant distribution on $\dom{Y}^k$. We can thus drop the condition that $\Xvec$ is Markov by showing that the order of Markov approximation and projection through a non-injective function commutes in the sense of Figure~\ref{fig:commute}.

\begin{cor}\label{cor:stationary}
Let $\Xvec$ be a stationary stochastic process on the finite alphabet $\dom{X}$. Let $g{:}\ \dom{X}\to\dom{Y}$. Define $\Yvec$ via $Y_n=g(X_n)$, and let $\Zvec\sim\Mar{k,\dom{Y},\Qm}$ be the $k$-th order Markov model of $\Yvec$ with $\Qm$ given in~\eqref{eq:aggregation}. If the $k$-th order Markov approximation $\Mvec$ of $\Xvec$ is such that $\Mvec^{(k)}$ has a single recurrent class, then $\Zvec$ has a unique invariant distribution $\mu$ on $\dom{Y}^k$.
\end{cor}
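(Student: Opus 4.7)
The plan is to reduce Corollary~\ref{cor:stationary} to Corollary~\ref{cor:higher} by formalizing the commuting diagram in Figure~\ref{fig:commute}. The idea is that, along the lower route of the diagram, one may equivalently first $k$-approximate $\Xvec$ by the chain $\Mvec$ and then lump through $g$; the resulting process $\tilde\Yvec:=g(\Mvec)$ is a genuine function of a $k$-th order Markov chain whose state-space-expanded version has a single recurrent class, so Corollary~\ref{cor:higher} applies to it directly.

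Concretely, I would first let $\Mvec$ denote the $k$-th order Markov approximation of $\Xvec$. By Lemma~\ref{lem:invariant}, $p_{X_0^{k-1}}$ is an invariant distribution of $\Mvec$, and by the hypothesis that $\Mvec^{(k)}$ has a single recurrent class this invariant distribution is unique. Initializing $\Mvec$ from $p_{X_0^{k-1}}$ therefore yields a stationary process, putting $\Mvec$ exactly in the setting of Corollary~\ref{cor:higher} with $\ell=k$. Setting $\tilde Y_n := g(M_n)$, Corollary~\ref{cor:higher} immediately yields that the $k$-th order Markov approximation of $\tilde\Yvec$ has a unique invariant distribution on $\dom{Y}^k$.

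It remains to identify this approximation with $\Zvec$. By Definition~\ref{def:Mapprox}, the $k$-th order Markov approximation of a stationary process depends only on its $(k+1)$-dimensional marginal, so it suffices to prove $p_{\tilde Y_0^k}=p_{Y_0^k}$, which by summing over preimages under $g$ reduces to the identity $p_{M_0^k}=p_{X_0^k}$. For $x_0^k$ with $p_{X_0^{k-1}}(x_0^{k-1})>0$, the chain rule together with~\eqref{eq:aggregation} gives $p_{M_0^k}(x_0^k)=p_{X_0^{k-1}}(x_0^{k-1})\cdot p_{X_0^k}(x_0^k)/p_{X_0^{k-1}}(x_0^{k-1})=p_{X_0^k}(x_0^k)$; for the remaining indices both sides vanish.

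The only point requiring a little care is verifying that the uniform fallback branch in~\eqref{eq:aggregation} does not contaminate this identity: whenever $p_{X_0^{k-1}}(x_0^{k-1})=0$, the uniform transition is multiplied by a prefix probability of zero and thus contributes nothing — essentially the same mechanism that drives the proof of Lemma~\ref{lem:invariant}. Once $p_{M_0^k}=p_{X_0^k}$ is in hand, $\Zvec$ is literally the $k$-th order Markov approximation of $\tilde\Yvec$, and the preceding paragraph closes the argument.
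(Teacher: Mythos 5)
Your proposal is correct and follows essentially the same route as the paper: both reduce to Corollary~\ref{cor:higher} with $\ell=k$ by making $\Mvec$ stationary via its unique invariant distribution $p_{X_0^{k-1}}$ (Lemma~\ref{lem:invariant}), and both close the argument by verifying $p_{\tilde Y_0^k}=p_{Y_0^k}$ through the identity $p_{M_0^k}=p_{X_0^k}$, with the zero-probability prefixes killing the uniform fallback branch of~\eqref{eq:aggregation}. No substantive differences.
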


\begin{proof}
 Let $\Mvec\sim\Mar{k,\dom{X},\Pm}$ be the $k$-th order Markov approximation of $\Xvec$. If $\Mvec^{(k)}$ has a single recurrent class, its invariant distribution is unique and equals, by Lemma~\ref{lem:invariant}, $\pi=p_{X_0^{k-1}}$. We set $p_{M_0^ {k-1}}=\pi$ such that $\Mvec$ is stationary and define $\tilde\Yvec$ via $\tilde{Y}_n=g(M_n)$. If we can show that the $k$-th order Markov approximation of $\tilde\Yvec$ coincides with the $k$-th order Markov approximation of $\Yvec$, then the proof follows from Corollary~\ref{cor:higher}. It therefore suffices to show that $p_{Y_0^{k}}=p_{\tilde{Y}_0^{k}}$. We have
 \begin{align}
  \forall y_0^k\in\dom{Y}^{k+1}{:}\quad p_{\tilde{Y}_0^{k-1}}(y_0^k) &=
  \sum_{x_0^k\in g^{-1}(y_0^k)} p_{M_0^k}(x_0^k)\\
  &\stackrel{(a)}{=}\sum_{x_0^k\in g^{-1}(y_0^k)} p_{X_0^{k-1}}(x_0^{k-1}) \Pment{x_0^{k-1}}{x_k}\\
  &\stackrel{(b)}{=}\sum_{x_0^k\in g^{-1}(y_0^k)} p_{X_0^{k-1}}(x_0^{k-1}) \frac{p_{X_0^{k}}(x_0^{k})}{p_{X_0^{k-1}}(x_0^{k-1})}\\
  &= \sum_{x_0^k\in g^{-1}(y_0^k)} p_{X_0^{k}}(x_0^{k})=p_{Y_0^k}(y_0^k)
 \end{align}
where in $(a)$ we used the fact that $p_{X_0^{k-1}}$ is the unique invariant (thus stationary) distribution of $\Mvec$ and where $(b)$ follows because those $x_0^{k}\in g^{-1}(y_0^k)$ for which $p_{X_0^{k-1}}(x_0^{k-1})=0$ do not influence the sum. This completes the proof.
\end{proof}

\section{Proof of Theorem~\ref{thm:main}}
That $\mu=p_{Y_0^{k-1}}$ is an invariant distribution for $\Zvec$ follows from Lemma~\ref{lem:invariant}. To show that this invariant distribution is unique, we show that the first-order Markov chain $\Zvec^{(k)}$ has the single recurrent class
\begin{equation}
 \dom{S} := \{y_0^{k-1}\in\dom{Y}{:}\quad \pmfy{_0^{k-1}}>0\}.
\end{equation}
We show in Section~\ref{ssec:recurrent} that all states in $\dom{S}$ communicate by showing that for all $y,y'\in\dom{S}$ we have $y\to y'$ (hence, $y\leftrightarrow y'$). We show in Section~\ref{ssec:class} that $\dom{S}$ is a recurrent class by showing that, for $y\in\dom{S}$ and $y'\in\dom{S}^c$, we have $y\not\to y'$. Finally, in Section~\ref{ssec:transient} we show that the states in $\dom{S}^c$ are transient by showing that for every $y'\in\dom{S}^c$ there is an $y\in\dom{S}$ such that we have $y'\to y$. This completes the proof that $\dom{S}$ is the single recurrent class of $\Zvec^{(k)}$, from which the uniqueness of $\mu$ follows~\cite[Thm.~4.4.2]{Gallager_StochasticProcesses}.

\subsection{All states in $\dom{S}$ communicate}\label{ssec:recurrent}
\begin{lem}\label{lem:absolutecontinuity}
 For every $n\ge 0$ and every $y_0^{k-1}\in\dom{Y}^k$ such that $\pmfy{_0^{k-1}}>0$, we have
 \begin{equation}\label{eq:abscont}
  \forall y_{k}^{k+n}\in\dom{Y}^{n+1}{:}\quad \pmfm[_0^{k-1}]{_{k}^{k+n}}=0 \Rightarrow \pmfy[_0^{k-1}]{_{k}^{k+n}}=0.
 \end{equation}
\end{lem}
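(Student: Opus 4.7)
My plan is to prove the lemma by expanding the $\Zvec$-conditional probability as a product of $k$-th order transition entries and then, whenever one of those entries vanishes, trace it back through the definition of the Markov approximation and the stationarity of $\Yvec$ to a corresponding zero marginal of $\Yvec$. Induction on $n$ would work too, but a direct argument is cleaner.

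First I would write, using that $\Zvec\sim\Mar{k,\dom{Y},\Qm}$,
\begin{equation*}
\pmfm[_0^{k-1}]{_{k}^{k+n}} = \prod_{j=0}^{n} \Qment{y_j^{j+k-1}}{y_{j+k}}.
\end{equation*}
If the left-hand side is zero, at least one factor vanishes, say $\Qment{y_{j}^{j+k-1}}{y_{j+k}}=0$ for some index $j\in\{0,\dots,n\}$. The key observation is that the ``else'' branch of~\eqref{eq:aggregation} assigns the strictly positive value $1/\card{\dom{Y}}$, so a zero entry can only arise from the first branch. Hence necessarily $p_{Y_0^{k-1}}(y_j^{j+k-1})>0$ and $p_{Y_0^{k}}(y_j^{j+k})=0$.

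Next, stationarity of $\Yvec$ gives $p_{Y_j^{j+k}}(y_j^{j+k})=p_{Y_0^k}(y_j^{j+k})=0$. Since $p_{Y_j^{j+k}}(y_j^{j+k})$ is obtained from $p_{Y_0^{k+n}}(y_0^{k+n})$ by summing the joint over the remaining coordinates (all nonnegative), we conclude $p_{Y_0^{k+n}}(y_0^{k+n})=0$. Dividing by the positive $p_{Y_0^{k-1}}(y_0^{k-1})$ yields $\pmfy[_0^{k-1}]{_{k}^{k+n}}=0$, which is the claimed implication.

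The only subtle point is the second paragraph, namely insisting that the zero factor come from the first case of~\eqref{eq:aggregation}; once this is established, stationarity of $\Yvec$ (a hypothesis in Theorem~\ref{thm:main}, since $\Xvec$ is stationary and $\Yvec$ is a coordinate-wise image of $\Xvec$) together with the trivial marginal bound closes the argument. I do not foresee any serious obstacle here — the lemma is really the observation that the Markov approximation cannot create paths of positive probability that the original process rules out.
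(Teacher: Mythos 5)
Your proof is correct, but it takes a different route from the paper: the paper's own proof is essentially a one-line citation of an external result (Gray's Thm.~10.1, eq.~(10.8)), which states that the $k$-th order Markov approximation with the choice~\eqref{eq:aggregation} satisfies $\pmf{Z_0^{k+n}}(y_0^{k+n})=0\Rightarrow\pmf{Y_0^{k+n}}(y_0^{k+n})=0$ for all $n\ge0$, followed by division by $\pmfy{_0^{k-1}}>0$. What you have done is reprove that cited fact from scratch: factor the conditional into the transition entries $\Qment{y_j^{j+k-1}}{y_{j+k}}$, observe that a vanishing entry can only arise from the first branch of~\eqref{eq:aggregation} (since the ``else'' branch is strictly positive), pull the resulting zero of $\pmf{Y_0^k}$ back to a zero of the full joint via stationarity and the marginal bound, and divide. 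Every step checks out: the product expansion is valid because $\Zvec$ is a homogeneous $k$-th order Markov chain, stationarity of $\Yvec$ is inherited from $\Xvec$, and the inequality $\pmf{Y_0^{k+n}}(y_0^{k+n})\le \pmf{Y_j^{j+k}}(y_j^{j+k})$ is the standard marginalization bound. Your version has the advantage of being self-contained and of making explicit exactly where the positivity of the ``else'' branch is used --- which is the same positivity that Example~\ref{ex:reducible} shows cannot be dropped --- at the cost of a paragraph of argument that the paper outsources to the reference.
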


\begin{proof}
 This result is an immediate consequence of~\cite[Thm.~10.1,~eq.~(10.8)]{Gray_Entropy}. There, it was shown that with~\eqref{eq:aggregation} and with setting $\pmfm{_0^{k-1}}=\pmfy{_0^{k-1}}$, one gets for every $n\ge 0$
 \begin{equation}
  \forall y_{0}^{k+n}\in\dom{Y}^{k+n+1}{:}\quad \pmfm{_{0}^{k+n}}=0 \Rightarrow \pmfy{_{0}^{k+n}}=0.
 \end{equation}
 The proof follows by dividing both sides by $\pmfy{_0^{k-1}}>0$.
\end{proof}

We show that all states in $\dom{S}$ communicate by showing that, for every pair $y,y'\in\dom{S}$, we have $y\to y'$. With Lemma~\ref{lem:absolutecontinuity} it thus suffices to show that for every pair $y,y'\in\dom{S}$ there exists an $n=n(y,y')> 0$ such that
\begin{equation}\label{eq:recurrent:condition}
 \pmf{Y_{n}^{n+k-1}|Y_0^{k-1}}(y'|y)>0.
\end{equation}
We can write this as
\begin{align}
& \pmf{Y_{n}^{n+k-1}|Y_0^{k-1}}(y'|y) \notag\\
&= \sum\limits_{\substack{x_0^{k-1}\in g^{-1}(y)\\{x'}_n^{n+k-1}\in g^{-1}(y')}} 
    \pmf{X_n^{n+k-1}|X_0^{k-1}}({x'}_n^{n+k-1}|x_0^{k-1}) \pmf{X_0^{k-1}|Y_0^{k-1}}(x_0^{k-1}|y)\notag\\
&= \sum\limits_{\substack{x_0^{k-1}\in g^{-1}(y)\\{x'}_n^{n+k-1}\in g^{-1}(y')}} 
    \pmf{X_{\max\{k,n\}}^{n+k-1}|X_{k-1}}({x'}_{\max\{k,n\}}^{n+k-1}|x_{k-1}) \pmf{X_0^{k-1}|Y_0^{k-1}}(x_0^{k-1}|y).\label{eq:recurrent:lowerbound}
\end{align}
Since $y\in\dom{S}$ and $y'\in\dom{S}$, there exist $x_0^{k-1}\in g^{-1}(y)$ and ${x'}_n^{n+k-1}\in g^{-1}(y')$ such that $\pmf{X_0^{k-1}}(x_0^{k-1})>0$ and $\pmf{X_0^{k-1}}({x'}_n^{n+k-1})>0$. It follows that $x_{k-1}$ and $x'_{\max\{k,n\}}$ are elements of the single recurrent class of $\Xvec$. Moreover, it follows that $\pmf{X_0^{k-1}|Y_0^{k-1}}(x_0^{k-1}|y)>0$ and that
\begin{equation}\label{eq:recurrent:product}
 \pmf{X_0^{k-1}}({x'}_n^{n+k-1}) = \pi_{x'_n} \prod_{\ell=n+1}^{n+k-1} \Pment{x'_{\ell-1}}{x'_\ell} >0.
\end{equation}
Since $\Xvec$ is Markov, the first term in the sum in~\eqref{eq:recurrent:lowerbound} can be written as
\begin{multline}
 \pmf{X_{\max\{k,n\}}^{n+k-1}|X_{k-1}}({x'}_{\max\{k,n\}}^{n+k-1}|x_{k-1}) \\
 = \underbrace{\pmf{X_{\max\{1,n-k+1\}}|X_0}(x'_{\max\{k,n\}}|x_{k-1})}_%
 {=\Pment{x_{k-1}}{x'_{\max\{k,n\}}}^{(\max\{1,n-k+1\})}}
 \cdot \prod_{\ell=\max\{k,n\}+1}^{n+k-1} \Pment{x'_{\ell-1}}{x'_\ell}.
\end{multline}
Since $x_{k-1}$ and $x'_{\max\{k,n\}}$ are in the same recurrent class, it follows that there exists an $n=n(x_{k-1},x'_{\max\{k,n\}})$ such that the first term in the product is positive; the second term is positive by~\eqref{eq:recurrent:product}. Combined with $\pmf{X_0^{k-1}|Y_0^{k-1}}(x_0^{k-1}|y)>0$ it follows that at least one summand in~\eqref{eq:recurrent:lowerbound} is positive, from which~\eqref{eq:recurrent:condition} follows.

\subsection{$\dom{S}$ is a recurrent class}\label{ssec:class}
We already showed that, for all $y,y'\in\dom{S}$, $y\leftrightarrow y'$. To show that $\dom{S}$ is a recurrent class, i.e., an equivalence class under the relation ``$\leftrightarrow$'', we must show that for $y\in\dom{S}$ and $y'\in\dom{S}^c$, we have $y\not\to y'$, i.e.,
\begin{equation}\label{eq:class:condition}
 \forall y\in\dom{S}, {y'}\in\dom{S}^c{:}\ \forall n\ge 1{:}\quad 
 \pmf{Z_n^{k+n-1}|Z_0^{k-1}}({y'}|y)=0.
\end{equation}
Suppose the contrary is true, i.e., there exists a $y_0^{k-1}\in\dom{S}$ and an $n\ge 1$, such that for a $y_n^{n+k-1}\in\dom{S}^c$ we have $\pmfm[_0^{k-1}]{_n^{n+k-1}}>0$. From this follows that there exists at least one sequence $y_k^{n+k-1}\in\dom{Y}^{n}$ such that
\begin{equation}\label{eq:class:positive}
 \pmfm[_0^{k-1}]{_k^{n+k-1}}=\prod_{\ell=k}^{n+k-1} \Qment{y_{\ell-k}^{\ell-1}}{y_\ell} >0.
\end{equation}
Since, by assumption, $y_0^{k-1}\in\dom{S}$ and $y_n^{n+k-1}\in\dom{S}^c$, there must be an $\ell\in\{k,\dots,k+n-1\}$ such that $y_{\ell-k}^{\ell-1}\in\dom{S}$ and $y_{\ell-k+1}^\ell \in\dom{S}^c$. But
\begin{align*}
 \pmf{Y_0^{k-1}}(y_{\ell-k+1}^\ell)&\stackrel{(a)}{=} \pmf{Y_1^{k}}(y_{\ell-k+1}^\ell)\\
 &= \sum_{y\in\dom{Y}} \pmf{Y_0^{k}}(y,y_{\ell-k+1}^\ell)\\
 &\ge \pmf{Y_0^{k}}(y_{\ell-k}^\ell)\\
 &= \pmf{Y_k|Y_0^{k-1}}(y_\ell|y_{\ell-k}^{\ell-1}) \pmf{Y_0^{k-1}}(y_{\ell-k}^{\ell-1})\\
 &\stackrel{(b)}{=} \Qment{y_{\ell-k}^{\ell-1}}{y_\ell}  \pmf{Y_0^{k-1}}(y_{\ell-k}^{\ell-1})\\
 &\stackrel{(c)}{>} 0
\end{align*}
where $(a)$ is due to stationarity of $\Yvec$ and where $(b)$ is because $y_{\ell-k}^{\ell-1}\in\dom{S}$ and because of~\eqref{eq:aggregation} in Definition~\ref{def:Mapprox}. Finally, $(c)$ follows from $y_{\ell-k}^{\ell-1}\in\dom{S}$ and~\eqref{eq:class:positive}. Hence $y_{\ell-k+1}^\ell \in\dom{S}$, a contradiction.

\subsection{All states in $\dom{S}^c$ are transient}\label{ssec:transient}
To show that all states in $\dom{S}^c$ are transient, we must show that for every $y\in\dom{S}^c$ there is at least one $y'\in\dom{S}$ such that $y\to y'$. Since we showed in the last section that $y'\not\to y$, it follows that $y$ is transient.

Suppose $y_0^{k-1}\in\dom{S}^c$. From~\eqref{eq:aggregation} we get
\begin{equation}
 \forall y_k\in\dom{Y}{:}\quad \pmfm[_0^{k-1}]{_k}=\frac{1}{\card{\dom{Y}}}.
\end{equation}
If for at least one $y_k$ we have $y_1^k\in\dom{S}$, then we are done. If for every $y_k$ we have $y_1^k\in\dom{S}^c$, then
\begin{equation}
 \forall y_k^{k+1}\in\dom{Y}^2{:}\quad \pmfm[_0^{k-1}]{_k^{k+1}}=\frac{1}{\card{\dom{Y}}^2}.
\end{equation}
If for at least one $y_k^{k+1}$ we have $y_2^{k+1}\in\dom{S}$, then we are done. Otherwise, we repeat above procedure. Eventually, if for every $y_k^{2k-2}\in\dom{Y}^{k-1}$ we have $y_{k-1}^{2k-2}\in\dom{S}^c$, then
\begin{equation}
 \forall y_k^{2k-1}\in\dom{Y}^k{:}\quad \pmfm[_0^{k-1}]{_k^{2k-1}}=\frac{1}{\card{\dom{Y}}^k}.
\end{equation}
But at least one $y_k^{2k-1}\in\dom{Y}^k$ must be such that $y_k^{2k-1}\in\dom{S}$, since $\dom{S}$ is non-empty. Thus, from every $y\in\dom{S}^c$ there is at least one $y'\in\dom{S}$ such that $y\to y'$.

\crev{
\section{Conclusion and Outlook}
A $k$-th order homogeneous Markov chain $\Zvec$ with finite alphabet $\dom{Z}$ has a unique invariant distribution on $\dom{Z}^k$ if the first-order Markov chain $\Zvec^{(k)}$ has a single recurrent class. We presented a sufficient condition for this to be the case: $\Zvec$ has a unique invariant distribution on $\dom{Z}^k$ if it is the $k$-th order Markov approximation of a function of a first-order Markov chain $\Xvec$ with a single recurrent class. This condition has practical relevance in, e.g., state space reduction for Markov chains, e.g.,~\cite{GeigerWu_HigherOrder,Meyn_MarkovAggregation}. We generalized our result to $\Xvec$ being a Markov chain of any order and to $\Xvec$ being not Markov at all.

Example~\ref{ex:strange} suggests that our $k$-th order Markov approximation in Definition~\ref{def:Mapprox} leads to counterintuitive results if the process to be approximated is a Markov chain of order smaller than or equal to $k$. Future work shall investigate whether a different choice of $\Qm$ in Definition~\ref{def:Mapprox} can alleviate this problem while still ensuring that Theorem~\ref{thm:main} holds.
}
 
\section*{References}
\bibliographystyle{elsarticle-num}
\bibliography{references}

\section*{Funding}
The work was funded by the Erwin Schr\"odinger Fellowship J 3765 of the Austrian Science Fund.
\end{document}